\newtheorem{lemma}{Lemma}
\begin{document}

\title{The growth of digital sums of powers of two}
\author{David G Radcliffe}
\date{\today}
\maketitle
\thispagestyle{empty}

In this note, we give an elementary proof that $s(2^n) > \log_4 n$ for all $n$, 
where $s(n)$ denotes the sum of the digits of $n$ written in base 10.
In particular, $\lim_{n\to\infty} s(2^n) = \infty$.

The reader will notice that this lower bound is very weak. 
The number of digits of $2^n$ is $\lfloor n \log_{10} 2\rfloor + 1$, 
so it is natural to conjecture that
$$
\lim_{n\to\infty} \frac{s(2^n)}{n} = \frac92 \log_{10} 2.
$$
However, this conjecture remains open\cite{oeisA001370}.

In 1970, H.\,G.\,Senge and E.\,G.\,Straus proved that 
the number of integers
whose sum of digits is less than a fixed bound with respect to the bases $a$ and $b$
is finite if and only if $\log_b a$ is rational\cite{MR0340185}.
As the sum of the digits
of $a^n$ in base $a$ is 1, this result implies that
$$\lim_{n\to\infty} s(a^n) = \infty$$
for all positive integers $a$ except powers of 10.
This work was extended by C.\,L.\,Stewart, who gave an effectively
computable lower bound for $s(a^n)$\ \cite{MR586115}. However,
this lower bound is asymptotically weaker than our bound, and Stewart's proof relies on deep
results in transcendental number theory.

We begin with two simple lemmas.

\begin{lemma}
Every positive integer $N$ can be expressed in the form
\begin{equation*}
     N = \sum_{i=1}^m d_i\cdot 10^{e_i}
\end{equation*}
where $d_i$ and $e_i$ are integers so that $1\le d_i\le 9$ and
$$0 \le e_1 < e_2 < \cdots < e_m.$$
Furthermore,
$$s(N) = \sum_{i=1}^m d_i \ge m. $$
\end{lemma}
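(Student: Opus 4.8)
The plan is to reduce the statement to the familiar existence of the base-$10$ expansion, which is itself a consequence of the division algorithm. First I would show, by strong induction on $N$ (equivalently, by repeated division by $10$), that every positive integer admits a decimal expansion $N = \sum_{j=0}^{k} a_j \cdot 10^j$ with each digit satisfying $0 \le a_j \le 9$ and with $a_k \neq 0$. The inductive step writes $N = 10q + r$ with $0 \le r \le 9$ via the division algorithm, applies the hypothesis to the strictly smaller quotient $q$, and prepends $r$ as the units digit.

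Next I would pass from this expansion to the claimed form by discarding the vanishing digits. Let $e_1 < e_2 < \cdots < e_m$ enumerate precisely those indices $j \in \{0, 1, \ldots, k\}$ for which $a_j \neq 0$, and set $d_i := a_{e_i}$. By construction $1 \le d_i \le 9$ and $0 \le e_1 < \cdots < e_m$, while $\sum_{i=1}^m d_i \cdot 10^{e_i} = \sum_{j=0}^k a_j \cdot 10^j = N$, since the omitted terms contribute nothing. This yields the required representation.

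Finally, the assertion about $s(N)$ is essentially a matter of unwinding definitions: the sum of the decimal digits of $N$ is $\sum_{j=0}^k a_j$, and deleting the zero digits leaves $\sum_{i=1}^m d_i$, so $s(N) = \sum_{i=1}^m d_i$. Since each $d_i \ge 1$, a sum of $m$ such terms is at least $m$, giving $s(N) \ge m$. I do not anticipate a genuine obstacle here; the only point deserving a moment's care is confirming that the $d_i$ selected in the second step are exactly the nonzero decimal digits of $N$, so that the digit-sum identity holds on the nose rather than merely as an inequality.
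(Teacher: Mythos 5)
Your proposal is correct and follows essentially the same route as the paper: both rest on strong induction via the division algorithm ($N = 10q + r$), with the nonzero digits supplying the $d_i$ and their positions the $e_i$. The only cosmetic difference is that you first build the full decimal expansion and then discard the zero digits, whereas the paper's induction skips zero remainders inline; your explicit treatment of the identity $s(N) = \sum_{i=1}^m d_i \ge m$ is if anything slightly more complete than the paper's, which leaves that part implicit in the construction.
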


\begin{proof}
The proof is by strong induction on $N$. The case $N < 10$ is trivial.
Suppose that $N \ge 10$. By the division algorithm, there exist integers $n\ge1$ and $0\le r\le9$ so that $N = 10n + r$. By the induction hypothesis, we can express $n$ in the form
$$ n = \sum_{i=1}^m d_i\cdot 10^{e_i}.$$
If $r = 0$, then 
$$
N = \sum_{i=1}^m d_i \cdot 10^{e_i+1}
$$
and if $r > 0$ then
$$
N = r\cdot 10^0 + \sum_{i=1}^m d_i \cdot 10^{e_i+1}.
$$
 In either case, $N$ has an expression of the required form.
\end{proof}

\begin{lemma}
Let $2^n = A + B\cdot 10^k$ where $A, B, k, n$ are positive integers and $A < 10^k$.
Then $A \ge 2^k$.
\end{lemma}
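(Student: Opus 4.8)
The plan is to read the hypothesis $A < 10^k$ as saying that $A$ is the number formed by the last $k$ decimal digits of $2^n$, and to extract divisibility information from the factorization $10^k = 2^k\cdot 5^k$. The key observation is that $2^k$ divides $A$; once this is established, the conclusion is immediate because $A$ is a positive integer, and the smallest positive multiple of $2^k$ is $2^k$ itself.

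First I would check that $n \ge k$, so that $2^k$ divides $2^n$. Since $B \ge 1$, we have $2^n = A + B\cdot 10^k \ge 10^k > 2^k$, and therefore $n > k$. In particular $2^k \mid 2^n$.

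Next, I would write $A = 2^n - B\cdot 10^k = 2^n - B\cdot 5^k\cdot 2^k$. The right-hand side is a difference of two multiples of $2^k$, so $2^k \mid A$. As $A$ is a positive integer divisible by $2^k$, it must be at least the smallest positive multiple of $2^k$, namely $2^k$; hence $A \ge 2^k$.

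I expect no serious obstacle here: the heart of the matter is the one-line divisibility computation $2^k \mid A$, obtained from $10^k = 2^k 5^k$. The only point requiring a word of justification is the inequality $n \ge k$, which is needed to guarantee $2^k \mid 2^n$, and this follows at once from the size comparison $2^n > 10^k$.
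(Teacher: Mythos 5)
Your proof is correct and follows essentially the same route as the paper: deduce $n > k$ from the size comparison $2^n > 10^k$, conclude $2^k \mid A$ since $2^k$ divides both $2^n$ and $B \cdot 10^k$, and finish using $A > 0$. The only cosmetic difference is that you spell out the factorization $10^k = 2^k \cdot 5^k$, which the paper leaves implicit.
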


\begin{proof}
Since $2^n > 10^k > 2^k$, it follows that $n > k$, so $2^k$ divides $2^n$. But $2^k$ also divides $10^k$, therefore $2^k$ divides $A$. But $A > 0$, so $A \ge 2^k$.
\end{proof}

We use these lemmas to establish a lower bound on $s(2^n)$. Write
$$2^n = \sum_{i=1}^m d_i \cdot 10^{e_i}$$
so that the conditions of Lemma 1 hold, and let $k$ be an integer between 2 and $m$.
Then $2^n = A + B \cdot 10^{e_k}$ where
$$
A = \sum_{i=1}^{k-1} d_i\cdot10^{e_i}
$$
and
$$
B = \sum_{i=k}^m d_i\cdot 10^{e_i - e_k} .
$$
Since $A < 10^{e_k}$, Lemma 2 implies that $A \ge 2^{e_k}$.
Therefore,
$$
2^{e_k} \le A < 10^{e_{k-1}+1}
$$
which implies that
$$
e_k \le \lfloor(\log_2 10) (e_{k-1} + 1)\rfloor.
$$

We prove that $e_k < 4^{k-1}$ for all $k$. It is clear that $e_1 = 0$, else $2^n$ would be divisible by 10. From the inequality above, we have $e_2 \le 3$, $e_3 \le 13$, $e_4 \le 46$, $e_5 \le 156$, and $e_6\le 521$.
If $k \ge 7$ then $e_{k-1} \ge 5$, so
\begin{align*}
e_k &< (\log_2 10) e_{k-1} + (\log_2 10) \\
         &< \frac{10}3 e_{k-1} + \frac{10}3 \\\
         &\le \frac{10}3 e_{k-1} + \frac23 e_{k-1}  \\
         &= 4e_{k-1}.
\end{align*}
Therefore, $e_k < 4^{k-1}$ for all $k$, by induction.

We are now able to prove the main result. Note that
$$
2^n < 10^{e_m+1} \le 10^{4^{m-1}}
$$
since $10^{e_m}$ is the leading power of 10 in the decimal expansion of $2^n$.

Taking logarithms gives
\begin{align*}
4^{m-1} &> n \log_{10} 2  \\
4^{m-1} &> n/4\\
4^m &> n \\
m &> \log_4 n
\end{align*}
hence
$$s(2^n) > \log_4 n .$$
In particular, $$\lim_{n\to\infty} s(2^n) = \infty.$$

\bibliographystyle{plain}

\bibliography{mybib}
\end{document}